\documentclass[ECP,preprint]{ejpecp} 

\SHORTTITLE{A data-dependent DKW inequality for regenerative Markov chains}

\TITLE{A data-dependent DKW inequality for regenerative Markov chains}

\AUTHORS{%
  Daniel~C.~Jerison\footnote{University of San Francisco.
    \EMAIL{dcjerison@usfca.edu}}
  }

\KEYWORDS{DKW inequality ; MCMC ; empirical concentration inequalities ; geometric ergodicity ; drift and minorization ; McDiarmid's inequality} 

\AMSSUBJ{60J05} 

\SUBMITTED{July 11, 2026} 
\ACCEPTED{N/A} 

\VOLUME{0}
\YEAR{20XX}
\PAPERNUM{0}
\DOI{10.1214/YY-TN}

\ABSTRACT{We prove a version of the Dvoretzky--Kiefer--Wolfowitz inequality for Markov chains with a regenerative structure. Suppose we have a regenerative Markov chain with stationary distribution $\pi$. Given a functional $\theta$ on the state space and a confidence level $1-\delta$, our result provides a uniform $1-\delta$ confidence band for the CDF of $\theta$ under $\pi$ based on the empirical CDF. By inversion, we get a $1-\delta$ confidence band for the quantile function of $\theta$ under $\pi$.

Our bounds are fully explicit and nearly optimal. In addition, they are data-dependent in the following sense: in the formula for the width of the confidence band, the leading term can be computed directly from the sample path without any \emph{a priori} information about the convergence rate of the chain. A convergence bound is required, but it contributes to the width of the confidence band only through a lower-order term. For this reason, our result is attractive for Markov chains whose convergence rate is much quicker in practice than what can be proved in theory.

Data-dependent bounds of this type are called empirical concentration inequalities in the literature. Thus, our result is an empirical concentration inequality for the empirical CDF of $\theta$ given the sample path.}

\newcommand{\R}{\mathbf{R}}
\newcommand{\Prob}{\mathbf{P}}
\newcommand{\eee}{\mathbf{E}}
\newcommand{\X}{\mathcal{X}}
\newcommand{\1}{\mathbf{1}}
\newcommand{\e}{\varepsilon}
\DeclareMathOperator{\Var}{\mathrm{Var}}
\newcommand{\W}{\mathcal{W}}

\begin{document}



\section{Introduction} \label{sec:introduction}

This paper is concerned with explicit, nonasymptotic bounds for Markov chain Monte Carlo estimation. Suppose that $(X_t)_{t \geq 0}$ is a discrete time Markov chain on a state space $\X$ with stationary distribution $\pi$. Let $\theta: \X \to \R$ be an observable quantity of interest. The empirical distribution of $\theta$ given the sample path $(X_0,\ldots,X_{n-1})$ is
\[
\hat{\pi}_n(\theta \in S) = \frac{1}{n} \sum_{t=0}^{n-1} \1\{\theta(X_t) \in S\}, \qquad \text{for measurable $S \subseteq \R$}.
\]
A DKW inequality for the Markov chain $(X_t)$ is a quantitative upper bound on
\[
\sup_{x \in \R} \left| \hat{\pi}_n(\theta \leq x) - \pi(\theta \leq x) \right|
\]
that holds with high probability. It is named after the classical inequality of Dvoretzky--Kiefer--Wolfowitz \cite{DKW56}, which provides the same type of bound in the case of iid data.

Various authors \cite{DMR95,GR14,KW14,P15,BC25} have proved DKW-type inequalities for Markov chains (or, more generally, for dependent sequences satisfying strong mixing conditions) under different assumptions. In any such result, it is necessary to have some control over the mixing properties of the chain. This is a major issue:
\begin{itemize}
\item Many chains used in the practice of MCMC have no theoretical convergence guarantees, or have only qualitative convergence guarantees (e.g., the chain is known to be geometrically ergodic but without any explicit convergence rate).
\item Some chains have theoretical convergence guarantees with explicit rates, but the numerical convergence bounds are very conservative compared with the observed behavior of the chain.
\end{itemize}

When dealing with a Markov chain in the second category, we might hope to get tighter estimation error bounds using information from the sample path. Our main result, Theorem \ref{thm:main}, accomplishes this goal for chains with a regenerative structure.

Let $\nu$ be a probability measure on the state space $\X$. A \emph{$\nu$-regeneration time} for the chain $(X_t)$ is a random time $T$ with the property that $X_T \sim \nu$, and moreover, the law of $X_T$ is independent of the value of $T$ and of the history $(X_t)_{t<T}$. In other words, at time $T$ the chain regenerates in a state drawn independently from $\nu$.

We will assume that the chain $(X_t)$ is started from $X_0 \sim \nu$ and has a sequence $0 = T_0,T_1,T_2,\ldots$ of $\nu$-regeneration times. These times partition the sample path $(X_t)_{t \geq 0}$ into tours $(X_t)_{T_{k-1} \leq t < T_k}$ with the property that each tour starts at a regeneration and ends just before the next regeneration. Theorem \ref{thm:main} assumes that the tours are iid and that the tail of the tour length distribution decays exponentially.

\begin{theorem} \label{thm:main}
Let $(X_t)_{t \geq 0}$ be a discrete time Markov chain on the state space $\X$ with stationary distribution $\pi$, and let $\theta: \X \to \R$ be any observable quantity associated with the chain. For some fixed probability measure $\nu$ on $\X$, assume that $(X_t)$ is equipped with a sequence $0 = T_0,T_1,T_2,\ldots$ of $\nu$-regeneration times such that the tours $(X_t)_{T_{k-1} \leq t < T_k}$ are iid. (In particular, this means that $X_0 \sim \nu$.) Write $T = T_1$, and assume that
\begin{equation} \label{eq:exp-bound}
\Prob(T>t) \leq Be^{-\gamma t} \qquad \text{for all } t \geq 0
\end{equation}
for some fixed positive constants $B,\gamma$.

Fix an integer $K \geq 1$, and let $n = T_K$ be the $K$-th regeneration time. The first $K$ tour lengths are $W_k = T_k - T_{k-1}$ for $1 \leq k \leq K$. Let $M = \max(W_1,\ldots,W_K)$ be the maximum of these tour lengths. Also, for each integer $j \geq 0$, let $m_j = \#\{1 \leq k \leq K : W_k > j\}$ be the number of tour lengths that exceed $j$.

Let
\[
\hat{\pi}_n(\theta \leq x) = \frac{1}{n} \sum_{t=0}^{n-1} \1\{\theta(X_t) \leq x\}
\]
be the empirical CDF of $\theta$ given the sample path $(X_0,\ldots,X_{n-1})$. Then, for every $\delta > 0$, with probability at least $1-\delta$ we have
\begin{align}
\sup_{x \in \R} \left| \hat{\pi}_n(\theta \leq x) - \pi(\theta \leq x) \right| \notag \\
&\leq \frac{1}{\sqrt{2n}} \sum_{j=0}^{M-1} \sqrt{ \frac{m_j}{n} \log\left( \frac{6n}{\delta m_j} \right) \wedge \frac{2m_j^2}{n}} + \frac{\sqrt{K}}{n} \cdot \frac{M}{\sqrt{\delta/3}} \label{eq:bound-1} \\
&\qquad + \frac{2}{n \sqrt{\delta/3}} \left( \frac{\log(BK)}{\gamma} \sqrt{\log\left(\frac{3}{\delta} \right)} + \frac{1}{1 - e^{-\gamma/2}} \right). \label{eq:bound-2}
\end{align}
\end{theorem}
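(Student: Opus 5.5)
The plan is to reduce the Markov chain problem to a problem about iid tours, using the regenerative identity $\pi(\theta \le x) = \eee[Y_1(x)]/\eee[T]$, where
\[
Y_k(x) = \sum_{t=T_{k-1}}^{T_k-1} \1\{\theta(X_t)\le x\}.
\]
Since $n = \sum_k W_k$, we have
\[
\hat\pi_n(\theta\le x) - \pi(\theta\le x) = \frac1n \sum_{k=1}^K \bigl(Y_k(x) - \pi(\theta\le x) W_k\bigr).
\]
The summands are iid and mean zero. I will split the failure probability $\delta$ into three pieces of size $\delta/3$, one for each of the three contributions in \eqref{eq:bound-1}--\eqref{eq:bound-2}.

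\emph{Step 1: condition on the tour lengths.} Given $W_1,\dots,W_K$, the tours are conditionally independent, though not identically distributed. Index states by their position $j$ within a tour, setting $X_{k,j} = X_{T_{k-1}+j}$ for $j < W_k$. Let $F_w(j,x) = \Prob(\theta(X_j)\le x,\ j< T \mid T=w)$ and $g(w,x) = \eee[Y_1(x)\mid T=w]$. This gives the decomposition
\[
n(\hat\pi_n - \pi)(x) = \sum_{j=0}^{M-1} \sum_{k:\,W_k>j} \bigl(\1\{\theta(X_{k,j})\le x\} - F_{W_k}(j,x)\bigr) + \sum_{k=1}^K \bigl(g(W_k,x) - \pi(\theta\le x) W_k\bigr).
\]
For fixed $j$, the inner sum runs over $m_j$ conditionally independent indicators. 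By a DKW/Massart-type inequality for independent, non-identically distributed variables (a sup over $x$ of a sum of independent monotone indicators), its supremum is at most $\sqrt{\tfrac{m_j}{2}\log(2/\delta_j)}$ with conditional probability $1-\delta_j$. The trivial bound $m_j$ gives the $\wedge$ alternative. I will choose $\delta_j = \tfrac{\delta}{3}\cdot\tfrac{m_j}{n}$. Since $\sum_j m_j = n$, a union bound over $j$ costs only $\delta/3$ and produces exactly the first term of \eqref{eq:bound-1} after dividing by $n$.

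\emph{Step 2: the bias term.} The remaining piece $\sum_k h_k(x)$, with $h_k(x) = g(W_k,x) - \pi(\theta\le x)W_k$, is an iid mean-zero sum with $|h_k(x)|\le W_k$. I will control $\sup_x|\sum_k h_k(x)|$ by a second-moment bound followed by Chebyshev/Markov at level $\delta/3$. This accounts for the $1/\sqrt{\delta/3}$ factors. Heuristically, the variance is at most $\sum_k W_k^2 \le K M^2$, which suggests the data-dependent term $\sqrt K M/(n\sqrt{\delta/3})$. The sup over $x$ can be reduced to a countable set and handled through monotonicity of $g(w,\cdot)$ in $x$, or else bounded via an Efron--Stein argument on the functional $\sup_x|\cdot|$.

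\emph{Step 3: the remaining tail and deterministic pieces.} To make Step 2 rigorous I need to replace unconditional moment bounds by the observed $M$. I plan to truncate at a level $\tau$ and use \eqref{eq:exp-bound} to bound
\[
\eee[M] \le \frac{\log(BK)}{\gamma} + \sum_{t}\min(1, BKe^{-\gamma t})\text{-type tail sums},
\]
which yields $\log(BK)/\gamma + 1/(1-e^{-\gamma/2})$ in \eqref{eq:bound-2}. The factor $\sqrt{\log(3/\delta)}$ suggests that the discrepancy between the realized and expected quantities is itself controlled by a McDiarmid step with differences of order $M$, applied at level $\delta/3$.

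I expect the main obstacle to be Step 2. The function $h_k$ is not monotone in $x$, so a uniform-in-$x$ bound must be obtained without losing a $\log$ factor. The bound must also end up in terms of the observed $M$, with the a priori rate $\gamma$ entering only through the lower-order term \eqref{eq:bound-2}. My first attempt will be an Efron--Stein variance bound for $\sup_x|\sum_k h_k|$ combined with a union over the event $\{M \le \tau\}$.
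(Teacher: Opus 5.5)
There is a genuine gap. Step 1 is fine in outline, but Steps 2--3, which carry the real content of the theorem, do not work as proposed.

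\textbf{The data-dependent leading term.} Chebyshev or Markov needs a \emph{deterministic} second moment. For fixed $x$ that moment is $\eee[(\sum_k h_k(x))^2]=K\eee[h_1(x)^2]\le K\eee(T^2)$, not the realized $\sum_k W_k^2\le KM^2$. As it stands, your leading term would depend on the unknown tour law, hence on $B,\gamma$, which is exactly what the theorem is designed to avoid.

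Step 3 does not repair this. A bound on $\eee[M]$, or a restriction to $\{M\le\tau\}$, controls $M$ from above. Converting a deterministic moment into the observed $M$ needs the opposite direction: the observed maximum must not be small relative to the true tail.

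The missing lemma is that $P_M=\Prob(T>M)\le\frac1K\log(3/\delta)$ with probability at least $1-\delta/3$. This holds because the largest of $K$ iid uniform quantile levels exceeds $1-\frac1K\log(3/\delta)$ with that probability.

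The paper bounds the relevant second moment by a sum of tail probabilities, $\sqrt{\eee[\W_1^2(\hat p,p)]}\le K^{-1/2}\sum_{j\ge 0}\sqrt{P_j(1-P_j)}$ with $P_j=\Prob(T>j)$. It then splits this sum at the random index $M$:
\begin{itemize}
\item the terms $j<M$ contribute at most $M/2$;
\item the terms $M\le j<J=\log(BK)/\gamma$ use $P_j\le P_M$ and contribute at most $J\sqrt{\log(3/\delta)/K}$;
\item the terms $j\ge J$ use $P_j\le Be^{-\gamma j}$ and contribute at most $K^{-1/2}(1-e^{-\gamma/2})^{-1}$.
\end{itemize}
Markov's inequality on $\W_1^2$ then finishes. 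This is where $\sqrt{\log(3/\delta)}$ and $e^{-\gamma/2}$ come from; the $\gamma/2$ reflects summing $\sqrt{P_j}$. No McDiarmid step and no bound on $\eee[M]$ enters. The three $\delta/3$'s go to DKW, the $P_M$ lemma, and Markov. With this lemma even your variance route could be made data-dependent, since $\eee(T^2)=\sum_j(2j+1)P_j$ splits at $M$ in the same way.

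\textbf{The supremum over $x$.} This is also unresolved in Step 2, and in your decomposition it is a genuine obstacle. Your $h_k(x)$ depends on $x$ through the laws $F_{W_k}(j,\cdot)$ conditional on the \emph{full} tour length. A fixed-$x$ second-moment bound therefore does not pass to $\sup_x$ without loss.

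The paper sidesteps this by conditioning only on $\{T>j\}$. Set $\pi_j=\Prob(X_j\in\cdot\mid T>j)$ and $q_j=P_j/\eee(T)$. Then $\pi=\sum_j q_j\pi_j$ and $\hat\pi_n(\theta\le x)=\sum_j\frac{m_j}{n}\hat F_j(x)$, where $\hat F_j$ is the empirical CDF of the $m_j$ states visited exactly $j$ steps after a regeneration. Given which tours exceed $j$, these states are iid $\pi_j$, so classical DKW applies. The bias $\sum_j(\frac{m_j}{n}-q_j)\pi_j(\theta\le x)$ is then bounded uniformly in $x$ by the $x$-free quantity $\sum_j|\frac{m_j}{n}-q_j|\le\frac{2K}{n}\W_1(\hat p,p)$, simply because $0\le\pi_j(\theta\le x)\le 1$.

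Your $\sum_k h_k(x)$ equals $n$ times this bias plus the extra term $\sum_j\sum_{k:W_k>j}\bigl(F_{W_k}(j,x)-\pi_j(\theta\le x)\bigr)$. The paper never has to control that extra term.

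One point in your favor: conditioning on all $W_k$ makes $n$ and $m_j$ deterministic. The paper's thresholds $\e_j$ involve $n$, which is not independent of the samples at offset $j$. The price is that you must cite a DKW inequality with Massart's constant for independent, non-identically distributed variables, which is not the classical iid statement.
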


\subsection{Size of the upper bound} \label{sec:size-upper-bound}

We now discuss the sizes of the terms \eqref{eq:bound-1} and \eqref{eq:bound-2}. Regarding the leading term \eqref{eq:bound-1}, a direct application of Jensen's inequality leads to the following bound.

\begin{lemma} \label{lemma:Jensen}
In the context of Theorem \ref{thm:main},
\[
\sum_{j=0}^{M-1} \sqrt{ \frac{m_j}{n} \log\left( \frac{6n}{\delta m_j} \right) \wedge \frac{2m_j^2}{n}} \leq \sqrt{M \log\left( \frac{6M}{\delta} \right)}.
\]
\end{lemma}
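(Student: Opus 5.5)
We drop the minimum with $2m_j^2/n$ and bound each summand by $\sqrt{(m_j/n)\log(6n/(\delta m_j))}$. The plan is then to apply Jensen's inequality twice: once to pull the sum inside a single square root, and once to handle the logarithm.

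\textbf{Step 1: a probability vector.} Set $p_j = m_j/n$ for $0 \le j \le M-1$. Because every tour has length at least $1$, we have
\[
\sum_{j=0}^{M-1} m_j = \sum_{j\ge 0}\#\{k : W_k > j\} = \sum_{k=1}^K W_k = n,
\]
so $(p_j)$ is a probability vector on $\{0,\dots,M-1\}$. Each $p_j>0$ for $j\le M-1$, since $m_j \ge 1$ there (the maximal tour exceeds $j$).

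\textbf{Step 2: first Jensen step.} By concavity of the square root (equivalently Cauchy--Schwarz),
\[
\sum_{j=0}^{M-1} \sqrt{p_j \log\left(\frac{6}{\delta p_j}\right)} \le \sqrt{M \sum_{j=0}^{M-1} p_j \log\left(\frac{6}{\delta p_j}\right)}.
\]

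\textbf{Step 3: second Jensen step.} We rewrite the inner sum as
\[
\log(6/\delta) + \sum_j p_j \log(1/p_j).
\]
The second term is the entropy of $(p_j)$. By Jensen's inequality for the concave $\log$,
\[
\sum_j p_j \log(1/p_j) \le \log\Bigl(\sum_j p_j \cdot \frac{1}{p_j}\Bigr) = \log M .
\]
So the inner sum is at most $\log(6M/\delta)$, and combining with Step 2 gives the claim.

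\textbf{Edge cases.} If $6/(\delta p_j) < 1$, the logarithm is negative. This can happen only when $\delta > 6$, which we may exclude as trivial. The monotonicity argument in Step 2 requires nonnegative summands, but Cauchy--Schwarz applies cleanly once the terms are real. Beyond checking these sign issues and the identity $\sum_j m_j = n$, there is no real obstacle.
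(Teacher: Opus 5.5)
Your proof is correct and is essentially the paper's argument. The paper drops the minimum and applies Jensen once to $f(x)=\sqrt{x\log(6/(\delta x))}$, asserting its concavity on $(0,1)$ by direct computation. You instead split that step into Cauchy--Schwarz for the square root followed by the entropy bound $\sum_j p_j\log(1/p_j)\le\log M$. This avoids the second-derivative check, and it is really why $f$ is concave: $f$ is a concave nondecreasing function of the concave $x\log(6/(\delta x))$. Your checks that $\sum_{j<M} m_j = n$ with every $m_j\ge 1$, and that the logarithms are nonnegative whenever $\delta\le 6$, are also fine.
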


From Lemma \ref{lemma:Jensen} it follows that
\[
\eqref{eq:bound-1} \leq \frac{M}{\sqrt{n}} \left[ \sqrt{\frac{1}{2M} \log\left( \frac{6M}{\delta} \right)} + \frac{\sqrt{K}}{\sqrt{n}} \cdot \frac{1}{\sqrt{\delta/3}} \right].
\]
Since $M$ is the maximum of $K$ iid tour lengths satisfying the exponential tail bound \eqref{eq:exp-bound}, it has size $\log(K) \leq \log(n)$. Thus, \eqref{eq:bound-1} has size $\log(n)/\sqrt{n}$ multiplied by a term that does not grow with $n$. Crucially, \eqref{eq:bound-1} is fully data-dependent: its size is determined by the actual frequency of regenerations observed in the sample path $(X_0,\ldots,X_{n-1})$ and not by the constants $B,\gamma$ in \eqref{eq:exp-bound}, which may be quite conservative. The dependence on $n$ is optimal except possibly for the logarithmic factor.\footnote{To the author's knowledge, all similar results either also have a log factor or impose an extra condition, such as uniform ergodicity, that allows the log factor to be removed. The bound in \cite{BC25} has a factor of $\sqrt{\log(n)}$. It is unknown whether the $\log(n)$ in \eqref{eq:bound-1} could be improved to $\sqrt{\log(n)}$ while preserving the data-dependent nature of the bound.}

The lower-order term \eqref{eq:bound-2} has the form
\begin{equation} \label{eq:big-rhs}
\frac{\log(BK)}{n\gamma}
\end{equation}
multiplied by a constant depending on $\delta$ (in the regime where $\log(BK)/\gamma \geq 1$). This has size $\log(n)/n$. We show in Section \ref{sec:sharpness} that the term \eqref{eq:big-rhs} is necessary under the assumptions of Theorem \ref{thm:main}. Thus, \eqref{eq:bound-2} has optimal dependence on $n,K$ and on $B,\gamma$.

The dependence on $\delta$ of \eqref{eq:bound-1} and \eqref{eq:bound-2} is not optimal, due to a step in the proof that uses Markov's inequality. We could instead invoke an exponential concentration inequality, which would replace the $\delta^{-1/2}$ terms in \eqref{eq:bound-1} and \eqref{eq:bound-2} with $\log(1/\delta)$ terms but worsen the dependence on $B,\gamma$. See Lemmas \ref{lemma:Wint-bound} and \ref{lemma:Markov} for details.

\subsection{Small sets} \label{sec:small-sets}

A standard way to construct the regeneration times required for Theorem \ref{thm:main} is to identify a \emph{small set}. This is a subset $A \subseteq \X$ together with a constant $\e>0$ and a probability measure $\nu$ on $\X$ such that
\[
P(x,\cdot) \geq \e \cdot \nu(\cdot) \qquad \text{for all } x \in A,
\]
where $P(x,\cdot)$ is the Markov transition kernel associated with the chain $(X_t)$. Given a small set $A$ which is visited infinitely often almost surely, the ``Nummelin splitting'' technique defines $(X_t)$ and the regeneration times $T_k$ on the same probability space. Every time the chain visits a state $x \in A$, flip an independent $(\e,1-\e)$ coin. If it shows $\e$, jump to $\nu$ and regenerate. Otherwise, sample the next state from the remainder measure $\frac{1}{1-\e}[P(x,\cdot) - \e\nu(\cdot)]$. For details, see \cite{MTY95,N84}.

To apply Theorem \ref{thm:main}, we must be able to identify the regeneration times associated with a given sample path $(X_0,\ldots,X_{n-1})$. Finding an appropriate set $A$ and measure $\nu$ may be quite difficult. But, once $A$ and $\nu$ have been chosen, identifying the regeneration times is often straightforward \cite{MTY95,AG07}. (In particular, it is not necessary to compute the normalizing constant for $\nu$.) The requirement that $X_0 \sim \nu$ is easily satisfied: simply run the chain until the first regeneration and discard the previous samples.

We note that the regeneration times required by Theorem \ref{thm:main} may be constructed using a small set or by any other procedure. The only requirements are that the tours must be iid, the tail of the tour length distribution must satisfy the exponential bound \eqref{eq:exp-bound}, and it must be possible to identify the regeneration times in simulation.

There is a vast literature on regenerative Markov chains. Much of it focuses on using regenerations to prove convergence bounds. Briefly, three ingredients are needed: (1) the regenerative structure, which may be provided by a small set; (2) some control over the frequency of regenerations, which may be provided by a so-called \emph{drift function} or \emph{Lyapunov function}; and (3) an aperiodicity condition (such as reversibility with nonnegative eigenvalues). See \cite{MT12,J16} for more information. Given a drift function, the constants $B,\gamma$ in \eqref{eq:exp-bound} can be computed using \cite[Theorem 4.9]{J16}.

In this paper, we rely on ingredients (1) and (2) only. These are not enough to bound the convergence rate of the chain. In fact, the chain could enjoy frequent regenerations even if it is periodic. This does not pose any problem for Theorem \ref{thm:main}, because the periodicity is washed out by the time average. Accordingly, we will need no extra hypotheses such as reversibility or aperiodicity to establish Theorem \ref{thm:main}.

\subsection{Empirical concentration inequalities} \label{sec:empirical}

An empirical concentration inequality (or data-dependent concentration inequality) is one that obtains sharper bounds by using properties of the random sample. To illustrate the general form, we describe a result of Maurer and Pontil \cite{MP09}.

Let $Z_1,\ldots,Z_n$ be iid random variables supported on $[0,1]$ with mean $\eee(Z)$ and variance $\Var(Z)$. Write $\overline{Z}_n$ and $\hat{V}_n(Z)$ for the sample mean and sample variance of $Z_1,\ldots,Z_n$. For all $\delta>0$, with probability at least $1-\delta$,
\begin{equation} \label{eq:empirical-Bennett}
\overline{Z}_n - \eee(Z) \leq \sqrt{\frac{2 \hat{V}_n(Z) \log(2/\delta)}{n}} + \frac{7 \log(2/\delta)}{3(n-1)}.
\end{equation}

The bound \eqref{eq:empirical-Bennett} differs from the classical Bennett's inequality in that the sample variance $\hat{V}_n(Z)$ replaces $\Var(Z)$ in the leading term. It is instructive to compare the form of \eqref{eq:empirical-Bennett} with our Theorem \ref{thm:main}. In both cases, the leading term is purely data-dependent. This makes the inequality tighter when the random sample is ``better-behaved'' than we could expect from any \emph{a priori} information. In \eqref{eq:empirical-Bennett}, the improvement happens when the sample variance is much less than $1/4$; in Theorem \ref{thm:main}, the improvement happens when the sample path regenerates much more frequently than is guaranteed by \eqref{eq:exp-bound}.

Many authors have developed empirical concentration inequalities in the vein of \eqref{eq:empirical-Bennett} in various contexts. The recent work \cite{MTR25} provides a substantial quantitative improvement on the techniques of \cite{MP09}. We highlight two papers that extend the theory of empirical concentration inequalities in the direction of Markov chains and dependent sample data. Wintenberger \cite{W17} considers a geometrically ergodic Markov chain with a small set that provides regenerations and a drift function $V$ that controls their frequency. He proves an empirical Bernstein inequality that can be used, for example, to bound the convergence of $\frac{1}{n} \sum_{t=0}^{n-1} V(X_t)$ to $\eee_\pi(V)$. Mirzaei et al \cite{MMKP25} extend the inequality \eqref{eq:empirical-Bennett} of \cite{MP09} to the setting of dependent sample data that satisfies a decay condition of the $\beta$-mixing coefficients. Since the $\beta$-mixing coefficients of a geometrically ergodic Markov chain decay exponentially \cite{R17}, this leads to an empirical concentration inequality for the convergence of $\frac{1}{n} \sum_{t=0}^{n-1} f(X_t)$ to $\eee_\pi(f)$ whenever $(X_t)$ is geometrically ergodic and $f$ is bounded.

For our purposes, the word ``empirical'' has two different senses: as a synonym for ``data-dependent'' in empirical concentration inequalities, and as a descriptor for the empirical CDF in the context of DKW-type bounds. One could characterize Theorem \ref{thm:main} as an empirical concentration inequality for the empirical CDF associated with a functional of a regenerative Markov chain. Aside from the linguistic curiosity, this perspective shows what is new about our work. DKW bounds and empirical concentration inequalities have already been established for Markov chains. Our result is the first to combine both of these in a data-dependent DKW inequality.

\subsection{Sharpness} \label{sec:sharpness}

We now explain why the term \eqref{eq:big-rhs} must appear in the right side of Theorem \ref{thm:main}.

Suppose that the state space $\X$ is partitioned into two regions $\X = \X_1 \cup \X_2$, with $\pi(\X_1) \approx \pi(\X_2) \approx 1/2$, and such that the chain is very unlikely to move between $\X_1$ and $\X_2$. If we define a small set $A$ and regeneration measure $\nu$ on $\X_1$, then the chain started from $\nu$ will enjoy frequent regenerations with short tour lengths as long as it stays within $\X_1$. Eventually, the chain will jump to $\X_2$ and stay there for a long time. This will show up in the sample path as an extremely long tour length.

Suppose that our sample path has gone through $K$ regenerations, so that we have $K$ independent samples from the tour length distribution. Also suppose that all of the observed tour lengths are reasonably short. From this information, we can say that long tours are unusual: with high probability, they occur with frequency at most $c/K$ for some constant $c$. But this says nothing about the length of a long tour once it occurs. In the bimodal example, the length of a long tour is determined by the probability of jumping from $\X_2$ to $\X_1$, which could be arbitrarily small.

This is why we need the \emph{a priori} bound \eqref{eq:exp-bound} on tour length. Suppose that we have observed $K$ regenerations for a total of $n$ samples from $(X_t)$. It could be that long tours occur with frequency $1/K$. Solving for $t$ in $Be^{-\gamma t} = 1/K$ yields
\[
t = \frac{\log(BK)}{\gamma}.
\]
Thus, the bound \eqref{eq:exp-bound} is consistent with the long tours having approximate length $\log(BK)/\gamma$ when they occur. If we append a long tour of this length after a sequence of $K$ short tours of total length $n$, the empirical CDF $\hat{\pi}_n(\theta \leq x)$ could change by up to
\[
\frac{\log(BK)/\gamma}{n + \log(BK)/\gamma}.
\]
Once $K$ and $n$ are large enough, this term equals \eqref{eq:big-rhs} up to a multiplicative constant.

\subsection{Discussion} \label{sec:discussion}

Our main result, Theorem \ref{thm:main}, is a data-dependent DKW inequality for regenerative Markov chains. Since it provides a $1-\delta$ confidence band for the CDF of the functional $\theta$ under $\pi$, by inverting we obtain a $1-\delta$ confidence band for the quantile function of $\theta$. In other words, Theorem \ref{thm:main} allows us to estimate all quantiles of $\theta$ simultaneously.

Various improvements to Theorem \ref{thm:main} are possible. First, perhaps the dependence on $\delta$ could be improved by replacing Lemma \ref{lemma:Wint-bound} with an exponential concentration inequality that does not worsen the dependence on $B,\gamma$. In addition, the result could be extended. Suppose that instead of estimating the CDF of a single functional $\theta$, we care about a family of functionals $\{\theta_s\}_{s \in \mathcal{S}}$. It is likely that empirical process theory could be used to bound
\[
\sup_{s \in \mathcal{S}} \sup_{x \in \R} \left| \hat{\pi}_n(\theta_s \leq x) - \pi(\theta_s \leq x) \right|
\]
under appropriate conditions on $\mathcal{S}$. In a different direction, we could hope for a bound that is uniform over all possible choices of $n$, as has been achieved in \cite{HR22} for the classical DKW inequality.

The techniques in this paper apply only to regenerative chains. What can be said about Markov chains for which the \emph{a priori} convergence bound comes in some other form, such as a bound on the spectral gap? Theorem \ref{thm:main} leverages the sample path $(X_t)$ into structural information about the Markov chain by counting regenerations. In what other ways does the sample path carry information about the chain?

The ultimate goal of results like Theorem \ref{thm:main} is to provide numerical bounds that are tight enough to be practically useful. The example discussed in Section \ref{sec:sharpness} shows the limitations of the regenerative approach: in the absence of additional conditions, the bounds given by Theorem \ref{thm:main} cannot be substantially improved. What extra conditions could we impose that would lead to tighter bounds, and could those conditions be verified for chains of practical interest?

\section{Proof of Theorem \ref{thm:main}} \label{sec:proof-of-main-theorem}

We now prove Theorem \ref{thm:main}. Proofs of several lemmas are postponed to Section \ref{sec:proofs-of-lemmas}.

For each $i>0$, set $p_i = \Prob(T=i)$. Also, for each $j \geq 0$, set
\[
P_j = \Prob(T>j) = \sum_{i>j} p_i \qquad \text{and} \qquad q_j = \frac{P_j}{\eee(T)}.
\]
We have $\sum_{j=0}^\infty q_j = 1$. For each $j \geq 0$, let
\[
\pi_j(\cdot) = \Prob(X_j \in \cdot \mid T>j)
\]
be the conditional law of $X_t$ given that the chain has taken $j$ steps since the most recent regeneration. It is well-known that $\pi = \sum_{j=0}^\infty q_j \pi_j$. This gives a decomposition of CDFs
\begin{equation} \label{eq:cdf-decomp}
\pi(\theta \leq x) = \sum_{j=0}^\infty q_j \pi_j(\theta \leq x).
\end{equation}

We now turn to the empirical distribution of $\theta$. For each $j \geq 0$, let
\[
S_j = \{T_{k-1} + j : 1 \leq k \leq K, \, T_{k-1} + j < T_k \}
\]
be the set of times that occur exactly $j$ steps after a regeneration and before the next regeneration. We observe that $S_j$ contains a time in the $k$-th tour if and only if the tour length satisfies $W_k > j$. Thus, each $|S_j| = m_j$.

Since the maximum tour length is $M$, we have $m_{M-1} > 0$ and $m_M = 0$. The sets $S_j$ form a partition of $\{0,1,\ldots,n-1\}$, so $m_0 + m_1 + \cdots + m_{M-1} = n$. For each $j$, the samples $\{X_t : t \in S_j\}$ are iid with distribution $\pi_j$, due to independence of the tours.

Denote the empirical CDF of $\theta$ by
\[
\hat{F}(x) = \hat{\pi}_n(\theta \leq x) = \frac{1}{n} \sum_{t=0}^{n-1} \1\{\theta(X_t) \leq x\}
\]
and, for each $0 \leq j \leq M-1$,
\[
\hat{F}_j(x) = \frac{1}{m_j} \sum_{t \in S_j} \1\{\theta(X_t) \leq x\}.
\]
Then, we have the decomposition
\begin{equation} \label{eq:emp-cdf-decomp}
\hat{F}(x) = \sum_{j=0}^{M-1} \frac{m_j}{n} \hat{F}_j(x).
\end{equation}
Notice that the decompositions \eqref{eq:cdf-decomp} and \eqref{eq:emp-cdf-decomp} have different weights.

We use \eqref{eq:cdf-decomp} and \eqref{eq:emp-cdf-decomp} to obtain the following bound.

\begin{lemma} \label{lemma:decomp-sum}
\begin{equation} \label{eq:decomp-sum}
\sup_{x \in \R} \left| \hat{F}(x) - \pi(\theta \leq x) \right| \leq \sum_{j=0}^{M-1} \frac{m_j}{n} \sup_{x \in \R} \left| \hat{F}_j(x) - \pi_j(\theta \leq x) \right| + \sum_{j=0}^\infty \left| \frac{m_j}{n} - q_j \right|.
\end{equation}
\end{lemma}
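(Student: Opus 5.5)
Fix $x \in \R$ and write $G_j(x) = \pi_j(\theta \leq x)$. Subtracting \eqref{eq:cdf-decomp} from \eqref{eq:emp-cdf-decomp} and inserting the mixed term $\sum_{j=0}^{M-1} \frac{m_j}{n} G_j(x)$, we obtain
\[
\hat{F}(x) - \pi(\theta \leq x) = \sum_{j=0}^{M-1} \frac{m_j}{n}\bigl(\hat{F}_j(x) - G_j(x)\bigr) + \sum_{j=0}^{\infty} \Bigl(\frac{m_j}{n} - q_j\Bigr) G_j(x).
\]
In the second sum we use $m_j = 0$ for $j \geq M$, so that the sum may be extended to all $j \geq 0$. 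It converges absolutely because $\sum_j q_j = 1$, the $m_j$ are zero beyond $M-1$, and $0 \leq G_j \leq 1$.

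Next we apply the triangle inequality to each sum. In the first sum, each term is bounded by $\frac{m_j}{n} \sup_y |\hat{F}_j(y) - G_j(y)|$, since $m_j \geq 0$. In the second sum, each term is bounded by $|m_j/n - q_j|$, since $G_j(x) \in [0,1]$. Neither bound depends on $x$. Taking the supremum over $x$ on the left-hand side then gives exactly \eqref{eq:decomp-sum}.

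There is no real obstacle here. The only points that need care are the following. First, for $j \leq M-1$ we have $m_j > 0$, because $m_j$ is nonincreasing in $j$ and $m_{M-1} > 0$, so $\hat{F}_j$ is well defined. Second, the identity $\pi = \sum_j q_j \pi_j$ must be available. The paper cites it as well known; it requires $\eee(T) < \infty$, which follows from \eqref{eq:exp-bound}.
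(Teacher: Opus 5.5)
Your proof is correct and matches the paper's argument: insert the mixed term $\sum_{j=0}^{M-1} \frac{m_j}{n}\pi_j(\theta \leq x)$, extend the second sum to all $j \geq 0$ using $m_j = 0$ for $j \geq M$, and bound each sum termwise using $0 \leq \pi_j(\theta \leq x) \leq 1$ before taking the supremum. Your side remarks, that $m_j > 0$ for $j \leq M-1$ and that $\eee(T) < \infty$ follows from the exponential tail bound, are correct details the paper leaves implicit.
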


The first sum on the right side of \eqref{eq:decomp-sum} represents the errors in learning the distribution of $\theta$ under each $\pi_j$. Since each $\hat{F}_j(x)$ is computed using iid samples from $\pi_j$, we can use the classical DKW inequality to prove the bound below:

\begin{lemma} \label{lemma:dkw-result} With probability at least $1 - \delta/3$,
\[
\sum_{j=0}^{M-1} \frac{m_j}{n} \sup_{x \in \R} \left| \hat{F}_j(x) - \pi_j(\theta \leq x) \right| \leq \frac{1}{\sqrt{2n}} \sum_{j=0}^{M-1} \sqrt{ \frac{m_j}{n} \log\left( \frac{6n}{\delta m_j} \right) \wedge \frac{2m_j^2}{n}}.
\]
\end{lemma}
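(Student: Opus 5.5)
The plan is to apply the classical DKW inequality (with Massart's sharp constant) separately to each offset class $S_j$. The confidence levels will be split across $j$ in proportion to the weights $m_j/n$, and the cases where DKW gives nothing better than the trivial bound will be handled separately. First I dispose of the trivial part. Since $\sup_x |\hat F_j(x) - \pi_j(\theta\le x)| \le 1$, each summand on the left is at most $m_j/n$, and
\[
\frac{1}{\sqrt{2n}}\sqrt{\frac{2m_j^2}{n}} = \frac{m_j}{n}.
\]
So the ``$\wedge\, 2m_j^2/n$'' branch holds deterministically, and it suffices to show that with probability at least $1-\delta/3$, simultaneously for all $0\le j\le M-1$,
\[
\sup_{x}\left|\hat F_j(x)-\pi_j(\theta\le x)\right| \le \e_j, \qquad \e_j := \sqrt{\frac{1}{2m_j}\log\left(\frac{6n}{\delta m_j}\right)}.
\]
Multiplying by $m_j/n$ gives exactly $\frac{1}{\sqrt{2n}}\sqrt{\frac{m_j}{n}\log(\frac{6n}{\delta m_j})}$. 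Summing over $j$, and taking the minimum with the trivial branch termwise, then yields the lemma.

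Second, I need the conditional iid structure. Fix $j$ and let $I_j=\{k\le K: W_k>j\}$, so that $|I_j| = m_j$. Because the tours are iid, conditioning on $I_j$ amounts to conditioning each tour separately on either $\{W_k>j\}$ or $\{W_k\le j\}$. Hence, given $I_j$, the states $X_{T_{k-1}+j}$ for $k\in I_j$ are iid with law $\Prob(X_j\in\cdot\mid T>j)=\pi_j$. (One must not condition on the exact values $W_k$, which would destroy this.) Given $I_j$, the Dvoretzky--Kiefer--Wolfowitz--Massart inequality then gives
\[
\Prob\Bigl(\sup_x|\hat F_j-\pi_j|>\e \,\Bigm|\, I_j\Bigr)\le 2e^{-2m_j\e^2}.
\]
At $\e=\e_j$ this equals $\delta m_j/(3n)$. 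Summing over $j$ and using $\sum_j m_j = n$ gives a total failure budget of exactly $\delta/3$.

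The main obstacle is that $\e_j$ depends on $n = T_K$. This quantity is not determined by $I_j$, and it is correlated with the offset-$j$ states through the lengths of the tours in $I_j$. So the conditional DKW bound cannot be applied verbatim at the random level $\e_j$, and I cannot simply sum conditional failure probabilities. My first attempt is to condition on a larger $\sigma$-field: $I_j$ together with the full tours $k\notin I_j$ and the pre-$j$ segments of the tours in $I_j$. The offset-$j$ states are still iid $\pi_j$ given this, by the Markov property at offset $j$ within each tour. The remaining dependence of $n$ is then only on the post-$j$ continuations. If that dependence still obstructs the argument, the fallback is to exploit monotonicity: $\e_j$ is increasing in $n$, so the failure event only shrinks as $n$ grows. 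I would lower-bound $n$ by a quantity measurable with respect to the conditioning, for instance $n\ge \sum_{k\notin I_j}W_k + (j+1)m_j$. Alternatively I would peel over dyadic ranges of $n/m_j$ with a union bound, absorbing the resulting constant into the factor $6$ inside the logarithm. I would check which of these reproduces the stated constant before committing to it. Finally, a union bound over $j$, with the empty classes $m_j=0$, $j\ge M$ contributing nothing, gives the claim with probability at least $1-\delta/3$.
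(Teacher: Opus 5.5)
Your first two paragraphs are the paper's proof. The trivial bound $1$ gives the $\wedge\,2m_j^2/n$ branch, $\e_j^2=\frac{1}{2m_j}\log\frac{6n}{\delta m_j}$ makes the DKW--Massart failure probability $\delta m_j/(3n)$, and $\sum_j m_j=n$ makes the union bound total $\delta/3$. The paper invokes DKW ``given any constants $\e_j$'' and then substitutes an $\e_j$ that depends on $n$. In effect it treats $m_j$ and $n$ as fixed and never addresses the issue in your last paragraph. You are right that the issue is real. Conditionally on $\mathcal{H}_j=\sigma(I_j,\ \text{tours } k\notin I_j)$, $m_j$ is fixed and the offset-$j$ states are iid $\pi_j$. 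But $n$ contains the residual lengths $W_k-j$, $k\in I_j$, and these generally depend on those same states. With Nummelin splitting, for instance, $W_k=j+1$ has conditional probability $\e\,\1\{X_{T_{k-1}+j}\in A\}$ given the offset-$j$ state. So ``failure probability $\le\delta m_j/(3n)$, summed over $j$'' is not justified as it stands.

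The gap in your proposal is that it stops at this obstacle, and none of your three routes closes it with the stated constant.

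(a) Conditioning also on the pre-$j$ segments destroys the iid-$\pi_j$ property for $j\ge1$. Given $X_{T_{k-1}+j-1}$ and non-regeneration, $X_{T_{k-1}+j}$ follows a transition law from that state (the residual kernel, for a split chain), not $\pi_j$. The Markov property makes past and future conditionally independent given the present; it does not make the present independent of the past. This conditioning also decouples nothing, since the post-$j$ continuations that determine $n$ still depend on the offset-$j$ states.

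(b) Replacing $n$ by $n'_j=\sum_{k\notin I_j}W_k+(j+1)m_j$ is valid class by class given $\mathcal{H}_j$. But the total budget becomes $\frac{\delta}{3}\eee\bigl(\sum_j m_j/n'_j\bigr)$. Since only $n'_j\ge(j+1)m_j$ is guaranteed, $\sum_j m_j/n'_j$ can be as large as $H_M=\sum_{i=1}^M 1/i\approx\log M$. For example, if every tour has length $L$, then $n'_j=(j+1)K$ and the sum is exactly $H_L$.

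(c) Peeling does no better. Class $j$'s budget must be committed before $n$ is known, so you pay for every dyadic range of $n/m_j$ above $n'_j/m_j$. That costs roughly $\frac{2\delta}{3}m_j/n'_j$ per class, which is route (b) again with a factor $2$. The loss is not a constant and cannot be absorbed by enlarging the $6$.

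So the step you correctly flag is still open in your write-up. It is the step the paper's proof takes for granted, and closing it needs an additional idea that neither supplies.
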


The second sum on the right side of \eqref{eq:decomp-sum} represents the error in learning the tour length distribution. For $i>0$ and $j \geq 0$, we have already defined $p_i = \Prob(T=i)$ and $P_j = \Prob(T>j) = \sum_{i>j} p_i$. Now, define the empirical frequencies
\[
\hat{p}_i = \frac{\#\{1 \leq k \leq K : W_k = i\}}{K}, \qquad \hat{P}_j = \sum_{i>j} \hat{p}_i = \frac{m_j}{K}.
\]
Below, we bound the second sum on the right side of \eqref{eq:decomp-sum} in terms of the 1-Wasserstein distance between the distributions $\hat{p} = (\hat{p}_i)_{i \geq 1}$ and $p = (p_i)_{i \geq 1}$, which has the formula
\[
\W_1(\hat{p},p) = \sum_{j=0}^\infty |\hat{P}_j - P_j|.
\]

\begin{lemma} \label{lemma:Wasserstein}
\[
\sum_{j=0}^\infty \left| \frac{m_j}{n} - q_j \right| \leq \frac{2K}{n} \W_1(\hat{p},p).
\]
\end{lemma}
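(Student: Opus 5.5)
We write both weights in terms of tail probabilities. Since $\hat{P}_j = m_j/K$, we have $m_j/n = (K/n)\hat{P}_j$. Since $q_j = P_j/\eee(T)$ and $\eee(T) = \sum_{j \geq 0} P_j$, we also have $q_j = P_j/\eee(T)$. Note also that $n = \sum_j m_j = K \sum_j \hat{P}_j$, so $n/K = \sum_j \hat{P}_j$ is the empirical mean tour length. The plan is to add and subtract the intermediate quantity $(K/n)P_j$ and apply the triangle inequality:
\[
\sum_{j=0}^\infty \left| \frac{m_j}{n} - q_j \right| \leq \frac{K}{n}\sum_{j=0}^\infty |\hat{P}_j - P_j| + \sum_{j=0}^\infty P_j \left| \frac{K}{n} - \frac{1}{\eee(T)} \right|.
\]
The first term is exactly $(K/n)\W_1(\hat{p},p)$.

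For the second term, we use $\sum_j P_j = \eee(T)$ to rewrite it as
\[
\eee(T)\left|\frac{K}{n} - \frac{1}{\eee(T)}\right| = \frac{K}{n}\left|\eee(T) - \frac{n}{K}\right| = \frac{K}{n}\left|\sum_j P_j - \sum_j \hat{P}_j\right|.
\]
By the triangle inequality this is at most $(K/n)\sum_j |P_j - \hat{P}_j| = (K/n)\W_1(\hat{p},p)$. Adding the two pieces gives the bound $(2K/n)\W_1(\hat{p},p)$.

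The only point needing care is that the series converge. We have $\eee(T) < \infty$ by the tail bound \eqref{eq:exp-bound}. The empirical tails $\hat{P}_j$ vanish for $j \geq M$. Hence all sums are finite and the rearrangements are legitimate. There is no real obstacle beyond choosing the intermediate term $(K/n)P_j$, which makes both differences reduce to the Wasserstein distance.
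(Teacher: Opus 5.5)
Your proof is correct and matches the paper's argument essentially step for step. You use the same intermediate term $(K/n)P_j$ and the same triangle-inequality split, and you bound the normalization mismatch by $(K/n)\W_1(\hat{p},p)$ using $\sum_j P_j = \eee(T)$ and $\sum_j \hat{P}_j = n/K$; your remark on convergence of the series is a small point the paper leaves implicit.
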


To control $\W_1(\hat{p},p)$, we first use a standard argument \cite[Theorem 3.2]{BL19} to show that:

\begin{lemma} \label{lemma:RMS}
\[
\sqrt{\eee(\W_1^2(\hat{p},p))} \leq \frac{1}{\sqrt{K}} \sum_{j=0}^\infty \sqrt{P_j(1-P_j)}.
\]
\end{lemma}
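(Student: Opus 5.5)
The plan is to expand the square of the Wasserstein sum and use the Cauchy--Schwarz inequality on the cross terms, in the manner of \cite[Theorem 3.2]{BL19}. By definition,
\[
\W_1(\hat{p},p) = \sum_{j=0}^\infty |\hat{P}_j - P_j|,
\]
so squaring gives
\[
\W_1^2(\hat{p},p) = \sum_{j,\ell \geq 0} |\hat{P}_j - P_j|\,|\hat{P}_\ell - P_\ell|.
\]
All terms are nonnegative, so Tonelli lets us take expectations termwise. For each pair $(j,\ell)$, Cauchy--Schwarz gives
\[
\eee\bigl(|\hat{P}_j - P_j|\,|\hat{P}_\ell - P_\ell|\bigr) \leq \sqrt{\eee|\hat{P}_j - P_j|^2}\,\sqrt{\eee|\hat{P}_\ell - P_\ell|^2}.
\]
Hence
\[
\eee(\W_1^2(\hat{p},p)) \leq \Bigl(\sum_{j=0}^\infty \sqrt{\eee|\hat{P}_j - P_j|^2}\Bigr)^2.
\]
An equivalent route is to apply Minkowski's inequality in $L^2(\Prob)$ to the (monotone limit of the) series: $\|\sum_j Y_j\|_{L^2} \leq \sum_j \|Y_j\|_{L^2}$ with $Y_j = |\hat{P}_j - P_j|$.

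Next, compute each second moment exactly. Since $\hat{P}_j = m_j/K$ and $m_j = \#\{k \leq K : W_k > j\}$, where the $W_k$ are iid copies of $T$ (the tours are iid), $m_j$ is $\mathrm{Binomial}(K, P_j)$. Thus $\eee(\hat{P}_j) = P_j$ and
\[
\eee|\hat{P}_j - P_j|^2 = \Var(\hat{P}_j) = \frac{P_j(1-P_j)}{K}.
\]
Taking square roots and substituting gives
\[
\sqrt{\eee(\W_1^2(\hat{p},p))} \leq \frac{1}{\sqrt{K}} \sum_{j=0}^\infty \sqrt{P_j(1-P_j)},
\]
which is the claim.

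There is no serious obstacle. The only point needing care is justifying the interchange of expectation and the infinite sum. Tonelli (nonnegativity) handles this whether or not the right side is finite. If one wants finiteness, \eqref{eq:exp-bound} gives $\sqrt{P_j} \leq \sqrt{B} e^{-\gamma j/2}$, so the series converges. One should also note that $W_1,\dots,W_K$ being iid with the law of $T = T_1$ follows from the assumption that the tours are iid.
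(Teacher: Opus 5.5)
Your proposal is correct and is essentially the paper's proof. The paper applies the triangle (Minkowski) inequality in $L^2$ directly to $\sum_j |\hat{P}_j - P_j|$, which is what your expand-and-Cauchy--Schwarz computation amounts to, and then uses the same $\textrm{Binomial}(K,P_j)$ variance computation; your Tonelli justification for exchanging expectation and the infinite sum is a bit of extra care the paper leaves implicit.
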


Since we observed $K$ independent tour lengths with a maximum length of $M$, we can say with high probability that tours of length greater than $M$ are rare.

\begin{lemma} \label{lemma:PM-bound}
With probability at least $1 - \delta/3$,
\begin{equation} \label{eq:PM-bound}
P_M \leq \frac{1}{K} \log\left( \frac{3}{\delta} \right).
\end{equation}
\end{lemma}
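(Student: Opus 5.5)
We need to show $\Prob(P_M > \frac{1}{K}\log(3/\delta)) \le \delta/3$. The plan is to compare $M$ with a deterministic threshold chosen from the law of $T$ alone; randomness then enters only through the event $\{M \le j^*\}$.

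Set $\epsilon = \frac{1}{K}\log(3/\delta)$. If $\epsilon \ge 1$ the claim is trivial, since $P_M \le 1$. Otherwise, define the deterministic integer
\[
j^* = \min\{ j \geq 0 : P_j \leq \epsilon \}.
\]
It is finite because $P_j \to 0$. Since $P_j$ is nonincreasing in $j$, the event $\{P_M > \epsilon\}$ equals $\{M < j^*\}$. If $j^* = 0$ this event is empty and we are done. Otherwise $j^* \ge 1$, and
\[
\{M < j^*\} = \{M \le j^*-1\} = \{W_k \le j^*-1 \text{ for all } 1 \le k \le K\}.
\]

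Because the tours are iid, the tour lengths $W_1,\ldots,W_K$ are iid copies of $T$. Therefore
\[
\Prob(P_M > \epsilon) = (1 - P_{j^*-1})^K.
\]
By minimality of $j^*$ we have $P_{j^*-1} > \epsilon$. Using $1-x \le e^{-x}$,
\[
\Prob(P_M > \epsilon) \le (1-\epsilon)^K \le e^{-K\epsilon} = \delta/3.
\]

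The only delicate point is handling the random index $M$ correctly. Replacing it with the deterministic threshold $j^*$, and using monotonicity of $P_j$ to convert $\{P_M > \epsilon\}$ into $\{M \le j^*-1\}$, is the key step. The inequality $P_{j^*-1} > \epsilon$ must be strict, and it is, by the definition of $j^*$. No use of the exponential tail bound \eqref{eq:exp-bound} is needed.
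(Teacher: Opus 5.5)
Your argument is correct. It ends with the same estimate $(1-\epsilon)^K \le e^{-K\epsilon}$ as the paper, but gets there by a slightly different mechanism.

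The paper couples each $W_k$ to an auxiliary iid uniform $U_k$ through the quantile transform. It bounds $\Prob(\max_k U_k < 1-\beta) = (1-\beta)^K$, and then reads off $P_M \le \beta$ from $\max_k U_k \ge 1-\beta$. That last step tacitly uses two facts. First, the coupling preserves the joint law of $(W_1,\ldots,W_K)$. Second, $F(F^{-1}(u)) \ge u$ for the discrete tour-length CDF $F$, which is what keeps the atoms of $T$ from causing trouble.

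You avoid auxiliary randomness altogether. You fix the deterministic threshold $j^*$ and compute $\Prob(M \le j^*-1)$ exactly. The atoms are handled explicitly by the strict inequality $P_{j^*-1} > \epsilon$, which is precisely the content hidden in the paper's quantile step. The paper's version is shorter and more intuitive; yours is fully self-contained.

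One small omission: $j^*$ is only guaranteed to exist when $\epsilon > 0$. Alongside the case $\epsilon \ge 1$, you should also dismiss $\delta \ge 3$, where the claim is vacuous because $1-\delta/3 \le 0$.
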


We split the sum on the right side of Lemma \ref{lemma:RMS} into three pieces. For $j<M$, we simply use that $P_j(1-P_j) \leq 1/4$. For $M \leq j < J$, where
\begin{equation} \label{eq:J-def}
J = \frac{1}{\gamma} \log(BK),
\end{equation}
we use $P_j \leq P_M$ and the bound in Lemma \ref{lemma:PM-bound}. Finally, for $j \geq J$, we use that
\[
P_j = \Prob(T>j) \leq Be^{-\gamma j}.
\]
Putting it all together leads to the following bound.

\begin{lemma} \label{lemma:combined-bound}
With probability at least $1 - \delta/3$,
\[
\sqrt{\eee(\W_1^2(\hat{p},p))} \leq \frac{M}{2\sqrt{K}} + \frac{1}{K} \left( \frac{\log(BK)}{\gamma} \sqrt{\log\left(\frac{3}{\delta} \right)} + \frac{1}{1 - e^{-\gamma/2}} \right).
\]
\end{lemma}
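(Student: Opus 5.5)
We start from Lemma \ref{lemma:RMS} and work on the event of Lemma \ref{lemma:PM-bound}, which has probability at least $1-\delta/3$. Note that $M$ is random while the left side of Lemma \ref{lemma:RMS} is deterministic. So the statement should be read as: on this event, the deterministic quantity $\sqrt{\eee(\W_1^2)}$ is bounded by the random right side. The argument is then just a pointwise (in $\omega$) bound on the deterministic series $\sum_j \sqrt{P_j(1-P_j)}$ using the data-dependent threshold $M$.

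We split the series at $M$ and at $\lceil J\rceil$, with $J=\log(BK)/\gamma$ as in \eqref{eq:J-def}.
\begin{itemize}
\item For $j<M$, use $P_j(1-P_j)\le 1/4$. The block contributes at most $M/2$, and after the factor $1/\sqrt{K}$ from Lemma \ref{lemma:RMS} this gives $M/(2\sqrt K)$.
\item For $M\le j<J$, use $\sqrt{P_j(1-P_j)}\le \sqrt{P_j}\le\sqrt{P_M}\le \sqrt{\log(3/\delta)/K}$, by monotonicity of $P_j$ and \eqref{eq:PM-bound}. There are at most $\lceil J\rceil - M$ such indices, and if $M\ge J$ the block is empty. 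After dividing by $\sqrt K$ this gives roughly $\frac{J}{K}\sqrt{\log(3/\delta)}$.
\item For $j\ge J$, use \eqref{eq:exp-bound}: $\sqrt{P_j}\le \sqrt B e^{-\gamma j/2}$.
\end{itemize}

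For the last block, sum the geometric series starting at $j_0=\lceil J\rceil$:
\[
\sum_{j\ge j_0}\sqrt B e^{-\gamma j/2}=\frac{\sqrt B e^{-\gamma j_0/2}}{1-e^{-\gamma/2}}\le \frac{\sqrt B\,(BK)^{-1/2}}{1-e^{-\gamma/2}}=\frac{1}{\sqrt K\,(1-e^{-\gamma/2})}.
\]
Dividing by $\sqrt K$ gives $\frac{1}{K(1-e^{-\gamma/2})}$, which matches the stated form.

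The main obstacle is integer bookkeeping at the boundary between the middle and last blocks. The middle block has $\lceil J\rceil - M \le \lceil J\rceil$ terms, not $J$, so a naive count gives $(J+1)$ in place of $J$. I would handle this by letting the middle block run over integers $j\in[M,J)$, of which there are at most $\lceil J\rceil\le J+1$. The fix I would try first is to start the tail at $\lfloor J\rfloor$ rather than $\lceil J\rceil$, so the middle block has at most $\lfloor J\rfloor\le J$ terms when $J\ge 0$. The tail then starts at $j_0=\lfloor J\rfloor > J-1$ and picks up a factor $e^{\gamma/2}$. That factor must be absorbed, for example by noting $\sqrt{1-P_j}\le 1$ is wasteful, or by shifting one term.

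If the constant does not close exactly, the fallback is to split at $\lceil J\rceil$ and absorb the extra term. That term is at most $\sqrt{P_M}\le\sqrt{\log(3/\delta)/K}$. I would bound it separately, or note that the paper's constant may implicitly allow this slack; the difference only affects lower-order constants.

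We also need the case $J<0$ (when $BK<1$). Then the middle block is empty, and the tail bound from $j=0$ gives $\sqrt B/(1-e^{-\gamma/2})\le 1/(\sqrt K(1-e^{-\gamma/2}))$, since $B<1/K$. So this case is fine.
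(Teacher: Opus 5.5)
Your decomposition is the same as the paper's: apply Lemma \ref{lemma:RMS} on the event of Lemma \ref{lemma:PM-bound}, split at $M$ and $J$, and use the bounds $1/4$, $P_M$ and $Be^{-\gamma j}$ on the three blocks. Your geometric tail computation starting at $\lceil J\rceil$ is correct. However, as written you do not reach the stated constant. You leave the middle-block count at $\lceil J\rceil$ instead of $J$, and none of your repairs works:
\begin{itemize}
\item Starting the tail at $\lfloor J\rfloor$ multiplies the last term by $e^{\gamma(J-\lfloor J\rfloor)/2}$. This exceeds $1$ whenever $J$ is not an integer, can be close to $e^{\gamma/2}$, and cannot be hidden inside $1/(1-e^{-\gamma/2})$.
\item The remark that $\sqrt{1-P_j}\le 1$ is wasteful is not a quantitative argument.
\item ``Bounding the extra term separately'' proves a weaker inequality with an additional $\frac{1}{K}\sqrt{\log(3/\delta)}$, not the lemma.
\end{itemize}

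The missing step is one line: every tour has length $W_k\ge 1$ (equivalently $P_0=\Prob(T>0)=1$), so $M\ge 1$. The integers $j$ with $M\le j<J$ therefore lie in $\{1,\dots,\lceil J\rceil-1\}$, so there are at most $\lceil J\rceil-1<J$ of them. You actually wrote the count as $\lceil J\rceil-M$ and then dropped the $-M$; keeping it closes the gap. With the tail taken over integers $j\ge J$, i.e.\ from $\lceil J\rceil$ as you did, you get exactly the stated bound, as in the paper.

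The same fact removes your $J<0$ case. Setting $t=0$ in \eqref{eq:exp-bound} gives $B\ge P_0=1$, so $BK\ge 1$ and $J\ge 0$ always. Your claim that this case ``is fine'' was also not justified as written. If $J<0$ were possible, the stated right side would contain the negative term $\frac{\log(BK)}{\gamma K}\sqrt{\log(3/\delta)}$, and your bound would not dominate the target there. The case is simply vacuous.
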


The next step is to apply a concentration inequality for $\W_1(\hat{p},p)$. Using an extension of McDiarmid's inequality due to Wintenberger \cite{W17} that allows for unbounded differences, we can show that:

\begin{lemma} \label{lemma:Wint-bound}
With probability at least $1 - \delta/3$,
\begin{equation} \label{eq:Wint}
\W_1(\hat{p},p) - \eee(\W_1(\hat{p},p)) \leq \frac{1}{\sqrt{K}} \left( \log\left( \frac{3}{\delta} \right) + \frac{\eee(T^2)}{2} + \frac{1}{2K} \sum_{k=1}^K W_k^2 \right).
\end{equation}
\end{lemma}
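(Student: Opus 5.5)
We will view $\W_1(\hat p,p)$ as a function $f(W_1,\ldots,W_K)$ of the $K$ iid tour lengths and apply Wintenberger's extension of McDiarmid's inequality \cite{W17} for functions with unbounded differences. The first step is to bound the sensitivity of $f$ to each coordinate. If $W_k$ is replaced by $W_k'$, then $\hat p$ changes by moving mass $1/K$ from the point $W_k$ to the point $W_k'$. Equivalently, $\hat P_j$ changes by $\pm 1/K$ exactly for $j$ between $\min(W_k,W_k')$ and $\max(W_k,W_k')-1$. By the triangle inequality for $\W_1$,
\[
|f(W) - f(W')| \leq \W_1(\hat p,\hat p') = \frac{|W_k - W_k'|}{K} \leq \frac{W_k + W_k'}{K}.
\]
So $f$ is $1$-Lipschitz with respect to the weighted $\ell^1$ distance $\frac1K\sum_k |W_k-W_k'|$, with differences controlled by $\frac{1}{K}(W_k+W_k')$.

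The second step is to invoke the inequality from \cite{W17}. As I recall it, for $f$ satisfying $|f(x)-f(x')|\le \sum_k c_k(x_k,x_k')$ with independent inputs, it gives a bound on the log-Laplace transform of the form
\[
\log \eee e^{\lambda(f-\eee f)} \leq \frac{\lambda^2}{2}\sum_k \eee\bigl[c_k(X_k,X_k')^2\bigr],
\]
or more precisely a self-normalized version. That version controls $\eee\exp\bigl(\lambda(f-\eee f) - \frac{\lambda^2}{2}\sum_k(\text{conditional/empirical squared differences})\bigr)\le 1$, mixing the expected term $\eee c_k^2$ (with respect to an independent copy) and the observed term $c_k(X_k,\cdot)^2$. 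With $c_k = |W_k - W_k'|/K$, I would bound $c_k^2 \le (W_k^2 + W_k'^2)/K^2$, using $|a-b|\le\max(a,b)$ for positive $a,b$. Summing over $k$, the expected part contributes $\eee(T^2)/K$ and the empirical part contributes $\frac{1}{K^2}\sum_k W_k^2$. Up to a factor of $1/2$ split between the two sides, this gives an exponential-moment bound
\[
\eee\exp\Bigl(\lambda(f-\eee f) - \frac{\lambda^2}{2K}\Bigl[\tfrac{\eee T^2}{1}\cdot\tfrac12\cdot 2 + \tfrac1K\textstyle\sum_k W_k^2\Bigr]\Bigr)\le 1 .
\]

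The third step is Markov's (Chernoff's) inequality with a fixed $\lambda$. This gives, with probability at least $1-\delta/3$,
\[
f - \eee f \leq \frac{\log(3/\delta)}{\lambda} + \frac{\lambda}{2K}\Bigl(\eee T^2 + \frac1K\sum_k W_k^2\Bigr).
\]
Choosing $\lambda = \sqrt K$ (a deterministic choice, so no union bound over $\lambda$ is needed) yields exactly the right side of \eqref{eq:Wint}: $\frac{1}{\sqrt K}\bigl(\log(3/\delta) + \frac{\eee T^2}{2} + \frac{1}{2K}\sum_k W_k^2\bigr)$.

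The main obstacle is matching the precise form of Wintenberger's theorem, in particular how the squared differences are split between the expected term over an independent copy and the observed term. The constants $1/2$ in \eqref{eq:Wint} suggest that each side receives half, after using $(W_k - W_k')^2 \le W_k^2 + W_k'^2$. I would first verify that the theorem applies to functions of independent (not merely martingale-structured) inputs with difference bounds depending on both the original and replaced coordinates. If the statement only allows $c_k$ depending on $x_k$ and $x_k'$ symmetrically, the bound $W_k^2+W_k'^2$ is already in that form.
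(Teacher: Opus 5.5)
Your proposal matches the paper's proof. The paper likewise bounds $|f(w)-f(w')| \le \W_1(\hat p(w),\hat p(w')) \le \frac1K|w_k-w_k'|$, invokes Remark 2.1 of \cite{W17} with $L_k = 1/K$ and $V(x)=x$ to obtain exactly your self-normalized bound $\eee\exp\bigl(\lambda(f-\eee f)-\frac{\lambda^2}{2}\sum_k \frac{1}{K^2}(\eee(W_k^2)+W_k^2)\bigr)\le 1$, and then applies Markov's inequality with the deterministic choice $\lambda=\sqrt K$; the one soft spot is that you reverse-engineer the constants of Wintenberger's result instead of quoting it, but the form you arrive at is precisely the one the paper cites.
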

Unfortunately, the $\eee(T^2)$ term means that the right side of \eqref{eq:Wint} depends suboptimally on $B,\gamma$. Since we could not resolve this issue, we instead use Markov's inequality. This results in a bound with optimal dependence on $B,\gamma$ but not on $\delta$.

\begin{lemma} \label{lemma:Markov}
With probability at least $1 - \delta/3$,
\[
\W_1(\hat{p},p) \leq \frac{\sqrt{\eee(\W_1^2(\hat{p},p))}}{\sqrt{\delta/3}}.
\]
\end{lemma}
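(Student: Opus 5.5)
This is a direct application of Markov's inequality to the nonnegative random variable $\W_1^2(\hat{p},p)$. Set $a = \eee(\W_1^2(\hat{p},p))$.

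If $a = 0$, then $\W_1(\hat{p},p) = 0$ almost surely, and the claimed bound holds trivially with probability one.

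If $a > 0$, set the threshold $s = \sqrt{a}/\sqrt{\delta/3}$. Markov's inequality applied to $\W_1^2$ gives
\[
\Prob\left(\W_1(\hat{p},p) > s\right) = \Prob\left(\W_1^2(\hat{p},p) > s^2\right) \leq \frac{a}{s^2} = \frac{\delta}{3}.
\]
Taking complements, $\W_1(\hat{p},p) \leq \sqrt{\eee(\W_1^2(\hat{p},p))}/\sqrt{\delta/3}$ with probability at least $1-\delta/3$, which is the statement.

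Two technical points need checking, neither of which is a real obstacle.

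\textbf{Measurability.} $\W_1(\hat{p},p) = \sum_j |\hat{P}_j - P_j|$ is a countable sum of measurable functions of $(W_1,\ldots,W_K)$, so it is a random variable.

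\textbf{Finiteness of $a$.} This is not strictly needed: if $a = \infty$ the bound is vacuous and holds trivially. In fact $a$ is finite. Lemma \ref{lemma:RMS} bounds $\sqrt{a}$ by $K^{-1/2}\sum_j \sqrt{P_j}$, and this sum is finite by \eqref{eq:exp-bound}.

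Note that $\sqrt{\eee(\W_1^2)}$ is deterministic. The random, data-dependent bound comes later, from combining this lemma with Lemma \ref{lemma:combined-bound} on the intersection of events, using a union bound.
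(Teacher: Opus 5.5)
Your proof is correct and matches the paper's: Markov's inequality applied to the nonnegative variable $\W_1^2(\hat{p},p)$ at threshold $\eee(\W_1^2(\hat{p},p))/(\delta/3)$. Your separate handling of the degenerate case $\eee(\W_1^2(\hat{p},p)) = 0$ is a minor tidy-up the paper omits. In that case the paper's threshold is zero and Markov's inequality does not literally apply, though the conclusion still holds.
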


We combine Lemmas \ref{lemma:Markov} and \ref{lemma:combined-bound} using a union bound. Then, we plug the result into Lemma \ref{lemma:Wasserstein}. With probability at least $1 - 2\delta/3$,
\[
\sum_{j=0}^\infty \left| \frac{m_j}{n} - q_j \right| \leq \frac{1}{\sqrt{\delta/3}} \cdot \frac{M\sqrt{K}}{n} + \frac{1}{\sqrt{\delta/3}} \cdot \frac{2}{n} \left( \frac{\log(BK)}{\gamma} \sqrt{\log\left(\frac{3}{\delta} \right)} + \frac{1}{1 - e^{-\gamma/2}} \right).
\]
Finally, we combine with Lemma \ref{lemma:dkw-result} using another union bound and invoke Lemma \ref{lemma:decomp-sum}. This concludes the proof of Theorem \ref{thm:main}. \qed

\section{Proofs of lemmas} \label{sec:proofs-of-lemmas}

\begin{proof}[Proof of Lemma \ref{lemma:Jensen}]
Let
\[
f(x) = \sqrt{x \log\left( \frac{6}{\delta x} \right)}.
\]
A direct computation shows that this function is concave on $(0,1)$. We have
\[
\sum_{j=0}^{M-1} \sqrt{ \frac{m_j}{n} \log\left( \frac{6n}{\delta m_j} \right) \wedge \frac{2m_j^2}{n}} \leq \sum_{j=0}^{M-1} \sqrt{ \frac{m_j}{n} \log\left( \frac{6n}{\delta m_j} \right) } = \sum_{j=0}^{M-1} f\left( \frac{m_j}{n} \right).
\]
Since $f$ is concave on $(0,1)$, for $x_0,\ldots,x_{M-1} \in (0,1)$ we have by Jensen's inequality
\[
\frac{1}{M} \sum_{j=0}^{M-1} f(x_j) \leq f \left( \frac{x_0 + \cdots + x_{M-1}}{M} \right).
\]
Plugging in $x_j = m_j/n$, we observe that $x_0 + \cdots + x_{M-1} = 1$. Thus,
\[
\sum_{j=0}^{M-1} f\left( \frac{m_j}{n} \right) \leq M f\left(\frac{1}{M}\right) = \sqrt{ M \log\left(\frac{6M}{\delta} \right) }. \qedhere
\]
\end{proof}

\begin{proof}[Proof of Lemma \ref{lemma:decomp-sum}]
We compute using \eqref{eq:cdf-decomp} and \eqref{eq:emp-cdf-decomp} that
\[
\begin{split}
\hat{F}(x) - \pi(\theta \leq x) &= \sum_{j=0}^{M-1} \frac{m_j}{n} \hat{F}_j(x) - \sum_{j=0}^\infty q_j \pi_j(\theta \leq x) \\
&= \sum_{j=0}^{M-1} \frac{m_j}{n}[\hat{F}_j(x) - \pi_j(\theta \leq x)] + \sum_{j=0}^\infty \left[ \frac{m_j}{n} - q_j \right] \pi_j(\theta \leq x),
\end{split}
\]
recalling that $m_j = 0$ for $j \geq M$. It follows that
\[
\sup_{x \in \R} |\hat{F}(x) - \pi(\theta \leq x)| \leq \sum_{j=0}^{M-1} \frac{m_j}{n} \sup_{x \in \R} |\hat{F}_j(x) - \pi_j(\theta \leq x)| + \sum_{j=0}^\infty \left| \frac{m_j}{n} - q_j \right|. \qedhere
\]
\end{proof}

\begin{proof}[Proof of Lemma \ref{lemma:dkw-result}]
We use the classical DKW inequality for iid data \cite{DKW56,M90,R24} separately for each $j$. Given any constants $\e_j > 0$, the DKW inequality says that
\begin{equation} \label{eq:DKW-wedge}
\sup_{x \in \R} \left| \hat{F}_j(x) - \pi_j(\theta \leq x) \right| \leq \e_j \wedge 1
\end{equation}
with probability at least $1 - 2\exp(-2m_j \e_j^2)$. (On the right side of \eqref{eq:DKW-wedge}, the $\e_j$ is the DKW bound and the $1$ is trivial.)

We will set
\[
\e_j^2 = \frac{1}{2m_j} \log\left( \frac{6n}{\delta m_j} \right), \qquad \text{so that} \qquad 2\exp(-2m_j \e_j^2) = \frac{\delta m_j}{3n}.
\]
By a union bound, \eqref{eq:DKW-wedge} holds for all $0 \leq j \leq M-1$ with probability at least
\[
1 - \sum_{j=0}^{M-1} \frac{\delta m_j}{3n} = 1 - \frac{\delta}{3}.
\]
This gives us
\[
\begin{split}
\sum_{j=0}^{M-1} \frac{m_j}{n} \sup_{x \in \R} \left| \hat{F}_j(x) - \pi_j(\theta \leq x) \right| &\leq \sum_{j=0}^{M-1} \frac{m_j}{n} \cdot \left( \frac{1}{\sqrt{2m_j}} \sqrt{ \log\left( \frac{6n}{\delta m_j} \right)} \wedge 1 \right) \\
&= \frac{1}{\sqrt{2n}} \sum_{j=0}^{M-1} \sqrt{ \frac{m_j}{n} \log\left( \frac{6n}{\delta m_j} \right) \wedge \frac{2m_j^2}{n}}. \qedhere
\end{split}
\]
\end{proof}

\begin{proof}[Proof of Lemma \ref{lemma:Wasserstein}]
We know that
\[
q_j = \frac{P_j}{\eee(T)}, \qquad \qquad \frac{m_j}{n} = \frac{m_j/K}{n/K} = \frac{\hat{P}_j}{n/K}.
\]
Thus,
\[
\begin{split}
\sum_{j=0}^\infty \left| \frac{m_j}{n} - q_j \right| &= \sum_{j=0}^\infty \left| \frac{\hat{P}_j}{n/K} - \frac{P_j}{\eee(T)} \right| \\
&= \sum_{j=0}^\infty \left| \frac{1}{n/K} \left( \hat{P}_j - P_j \right) + P_j \left( \frac{1}{n/K} - \frac{1}{\eee(T)} \right) \right| \\
&\leq \frac{K}{n} \sum_{j=0}^\infty \left| \hat{P}_j - P_j \right| + \left| \frac{1}{n/K} - \frac{1}{\eee(T)} \right| \sum_{j=0}^\infty P_j.
\end{split}
\]
Since
\[
\sum_{j=0}^\infty P_j = \eee(T) \qquad \text{and} \qquad \sum_{j=0}^\infty \hat{P}_j = \frac{n}{K},
\]
we have
\[
\begin{split}
\left| \frac{1}{n/K} - \frac{1}{\eee(T)} \right| \sum_{j=0}^\infty P_j &= \left| \frac{1}{n/K} - \frac{1}{\eee(T)} \right| \eee(T) = \frac{1}{n/K} \left| \eee(T) - \frac{n}{K} \right| \\
&= \frac{K}{n} \left| \sum_{j=0}^\infty P_j - \sum_{j=0}^\infty \hat{P}_j \right| \leq \frac{K}{n} \sum_{j=0}^\infty \left| P_j - \hat{P}_j \right|.
\end{split}
\]
It follows that
\[
\sum_{j=0}^\infty \left| \frac{m_j}{n} - q_j \right| \leq \frac{2K}{n} \sum_{j=0}^\infty \left| \hat{P}_j - P_j \right| = \frac{2K}{n} \W_1(\hat{p},p). \qedhere
\]
\end{proof}

\begin{proof}[Proof of Lemma \ref{lemma:RMS}]
Following the argument in \cite[Theorem 3.2]{BL19}, we write
\[
\W_1(\hat{p},p) = \sum_{j=0}^\infty |\hat{P}_j - P_j|.
\]
By the triangle inequality in $L^2$,
\[
\sqrt{\eee(\W_1^2(\hat{p},p))} \leq \sum_{j=0}^\infty \sqrt{\eee(|\hat{P}_j - P_j|^2)}.
\]
Each $\hat{P}_j$ is a random variable whose expected value is $P_j$. Thus,
\[
\eee(|\hat{P}_j - P_j|^2) = \Var(\hat{P}_j).
\]
We know that $\hat{P}_j = m_j/K$, where $m_j \sim \textrm{Binomial}(K, P_j)$, since each tour length has an independent $P_j$ chance to be longer than $j$. Therefore,
\[
\Var(\hat{P}_j) = \frac{P_j(1-P_j)}{K}.
\]
We conclude that
\[
\sqrt{\eee(\W_1^2(\hat{p},p))} \leq \sum_{j=0}^\infty \sqrt{\Var(\hat{P}_j)} = \frac{1}{\sqrt{K}} \sum_{j=0}^\infty \sqrt{P_j(1-P_j)}. \qedhere
\]
\end{proof}

\begin{proof}[Proof of Lemma \ref{lemma:PM-bound}]
Imagine a selection process for $M$ based on auxiliary independent $\textrm{Uniform}(0,1)$ random variables $U_1,\ldots,U_K$. Namely, after sampling each $U_k$, the length $W_k$ of the $k$-th tour is chosen by taking the $U_k$-th quantile of the tour length distribution. For each $0<\beta<1$,
\[
\Prob(\max(U_1,\ldots,U_K) \leq 1-\beta) = (1-\beta)^K.
\]
We choose
\[
\beta = \frac{1}{K} \log\left( \frac{3}{\delta} \right)
\]
so that
\[
(1-\beta)^K \leq e^{-\beta K} = \frac{\delta}{3}.
\]
With probability at least $1 - \delta/3$, $\max(U_1,\ldots,U_K)$ is at least $1-\beta$. This means that an independently chosen sample from the tour length distribution has probability at most $\beta$ to exceed $M$. In other words, $P_M \leq \beta$.
\end{proof}

\begin{proof}[Proof of Lemma \ref{lemma:combined-bound}]
Define $J$ as in \eqref{eq:J-def}. With probability at least $1 - \delta/3$, the bound \eqref{eq:PM-bound} from Lemma \ref{lemma:PM-bound} holds, and we have
\[
P_j(1-P_j) \leq \begin{cases} 1/4 & \text{if } j < M, \\ \frac{1}{K} \log(3/\delta) & \text{if } M \leq j < J, \\ Be^{-\gamma j} & \text{if } j \geq J. \end{cases}
\]
It follows that
\[
\begin{split}
\sum_{j=0}^\infty \sqrt{P_j(1-P_j)} &\leq \frac{M}{2} + \sum_{M\leq j < J} \frac{1}{\sqrt{K}} \sqrt{ \log\left( \frac{3}{\delta} \right) } + \sum_{j \geq J} \sqrt{B} e^{-\gamma j/2} \\
&\leq \frac{M}{2} + \frac{J}{\sqrt{K}} \sqrt{\log\left( \frac{3}{\delta} \right)} + \sqrt{Be^{-\gamma J}} \sum_{j \geq J} e^{-\gamma(j-J)/2} \\
&\leq \frac{M}{2} + \frac{\log(BK)}{\gamma \sqrt{K}} \sqrt{\log\left( \frac{3}{\delta} \right)} + \frac{1}{\sqrt{K}} \cdot \frac{1}{1 - e^{-\gamma/2}},
\end{split}
\]
using in the last line that $Be^{-\gamma J} = 1/K$. Plug into Lemma \ref{lemma:RMS} to finish the proof.
\end{proof}

\begin{proof}[Proof of Lemma \ref{lemma:Wint-bound}]
The empirical tour length distribution $\hat{p}$ depends on the lengths $W_1,\ldots,W_K$. Given $w = (w_1,\ldots,w_K)$, write $\hat{p}(w)$ for the empirical tour length distribution given that $W_1 = w_1,\ldots, W_K = w_K$. Define
\[
f(w) = \W_1(\hat{p}(w), p).
\]
Now, let $w' = (w_1,\ldots,w_{k-1},w'_k,w_{k+1},\ldots,w_K)$ be the same as $w$ except that the single entry $w_k$ has been replaced by $w'_k$. We have
\[
|f(w) - f(w')| = |\W_1(\hat{p}(w),p) - \W_1(\hat{p}(w'),p)| \leq \W_1(\hat{p}(w),\hat{p}(w')) \leq \frac{1}{K}|w_k - w'_k|.
\]
This means that the function $f$ satisfies inequality (3) in \cite{W17} (inequality (2.3) in the arXiv version) with $L_k = 1/K$ and $V(x) = x$. Remark 2.1 in \cite{W17} now says that
\[
\eee\left[ \exp\left( \lambda(f - \eee(f)) - \frac{\lambda^2}{2} \sum_{k=1}^K \frac{1}{K^2} \left( \eee(W_k^2) + W_k^2 \right) \right) \right] \leq 1
\]
for all $\lambda > 0$. By Markov's inequality, with probability at least $1 - \delta/3$,
\[
\lambda(f - \eee(f)) - \frac{\lambda^2}{2} \sum_{k=1}^K \frac{1}{K^2} \left( \eee(W_k^2) + W_k^2 \right) \leq \log\left( \frac{3}{\delta} \right)
\]
and therefore
\begin{equation} \label{eq:Wint-consequence}
f - \eee(f) \leq \frac{1}{\lambda} \log\left( \frac{3}{\delta} \right) + \frac{\lambda}{2} \sum_{k=1}^K \frac{1}{K^2} \left( \eee(W_k^2) + W_k^2 \right).
\end{equation}

The left side of \eqref{eq:Wint-consequence} is $\W_1(\hat{p},p) - \eee(\W_1(\hat{p},p))$. For the right side, we take $\lambda = \sqrt{K}$ and observe that $\eee(W_k^2) = \eee(T^2)$ for all $k$. Thus, with probability at least $1 - \delta/3$,
\[
\W_1(\hat{p},p) - \eee(\W_1(\hat{p},p)) \leq \frac{1}{\sqrt{K}} \log\left( \frac{3}{\delta} \right) + \frac{1}{2\sqrt{K}} \left( \eee(T^2) + \frac{1}{K} \sum_{k=1}^K W_k^2 \right). \qedhere
\]
\end{proof}

\begin{proof}[Proof of Lemma \ref{lemma:Markov}]
By Markov's inequality,
\[
\Prob\left( \W_1(\hat{p},p) \geq \frac{\sqrt{\eee(\W_1^2(\hat{p},p))}}{\sqrt{\delta/3}} \right) = \Prob\left( \W_1^2(\hat{p},p) \geq \frac{ \eee(\W_1^2(\hat{p},p))}{\delta/3} \right) \leq \frac{\delta}{3}. \qedhere
\]
\end{proof}

\end{document}